\newcommand{\beqa}{\begin{eqnarray*}}
\newcommand{\eeqa}{\end{eqnarray*}}
\newcommand{\beqn}{\begin{eqnarray}}
\newcommand{\eeqn}{\end{eqnarray}}
\newcommand{\ra}{\rightarrow}
\newcommand{\N}{\mathbb N}
\newcommand{\bT}{\mathbb T}
\newcommand{\f}{\frac}
\newcommand{\ph}{\phi}
\newcommand{\la}{\lambda}
\newcommand{\La}{\Lambda}
\newcommand{\Om}{\Omega}
\newcommand{\om}{\omega}
\newcounter{cnt1}
\newcounter{cnt2}
\newcounter{cnt3}
\newcommand{\blr}{\begin{list}{$($\roman{cnt1}$)$}
 {\usecounter{cnt1} \setlength{\topsep}{0pt}
 \setlength{\itemsep}{0pt}}}
\newcommand{\bla}{\begin{list}{$($\alph{cnt2}$)$}
 {\usecounter{cnt2} \setlength{\topsep}{0pt}
 \setlength{\itemsep}{0pt}}}
\newcommand{\bln}{\begin{list}{$($\arabic{cnt3}$)$}
 {\usecounter{cnt3} \setlength{\topsep}{0pt}
 \setlength{\itemsep}{0pt}}}
\newcommand{\el}{\end{list}}
\newtheorem{thm}{Theorem}[section]
\newtheorem{lem}[thm]{Lemma}
\newtheorem{cor}[thm]{Corollary}
\newtheorem{ex}[thm]{Example}
\newtheorem{Def}[thm]{Definition}
\newtheorem{prop}[thm]{Proposition}
\newtheorem{rem}[thm]{Remark}
\newcommand{\Rem}{\begin{rem} \rm}
\newcommand{\bdfn}{\begin{Def} \rm}
\newcommand{\edfn}{\end{Def}}
\newcommand{\ba}{\begin{array}}
\newcommand{\ea}{\end{array}}
\newtheorem*{ack}{Acknowledgements}
\begin{document}
\sloppy

\title[]{Representation of Generalized Bi-Circular Projections on Banach Spaces}

\author{A. B. Abubaker*}
\address[Abdullah Bin Abu Baker]{Department of Mathematics and Statistics
Indian Institute of Technology Kanpur
Kanpur - 208016
India}
\email{abdullah@iitk.ac.in }

\author{Fernanda Botelho}
\address[Fernanda Botelho]{Department of Mathematical Sciences, The University of Memphis, Memphis, TN 38152, USA}
\email{mbotelho@memphis.edu}

\author{James Jamison}
\address[James Jamison]{Department of Mathematical Sciences, The University of Memphis, Memphis, TN 38152, USA}
\email{jjamison@memphis.edu}

\thanks{* Supported by the Indian Institute of Technology Kanpur, Kanpur,
India and by the National Board of Higher Mathematics,
Department of Atomic Energy, Government of India, grant No.
2/44(56)/2011-R\&D-II/3414}

\subjclass[2010]{47B38; 47B15; 46B99; 47A65}

\keywords{Generalized bi-circular projections, projections as combination of finite order operators,
reflections, isometric reflections, isometries}

\date{\today}

\begin{abstract}

We prove several results concerning the representation of projections on arbitrary Banach spaces.
We also give illustrative examples including an example of a generalized bi-circular projection
which can not be written as the average of the identity with an isometric reflection. We also
characterize generalized bi-circular projections on $C_0(\Om,X)$, with $\Om$  a locally compact
Hausdorff space (not necessarily connected) and $X$ a Banach space with trivial centralizer.

\end{abstract}

\maketitle

\section{Introduction}

A projection $P$ on a complex Banach space $X$ is said to be a bi-circular
projection if $\displaystyle e^{ia}P + e^{ib}(I - P)$ is an
isometry, for all choices of real numbers $a$ and $b$. These
projections were first studied by Stacho and Zalar (in \cite{SZ1}
and \cite{SZ2}) and shown to be norm hermitian by Jamison (in \cite{J}).

Fo\v sner, Ili\v sevic and Li introduced a larger class of projections
designated generalized bi-circular projections (henceforth GBP),
cf. \cite{FIL}. A generalized bi-circular projection $P$ only requires
that $P + \lambda (I - P)$ is an isometry, for some $\la \in \bT \setminus \{1\}$.
These projections are not necessarily norm hermitian.
It is a consequence of the definition of a GBP that $P + \lambda (I-P)$ must be a
surjective isometry, since
\[ (P + \lambda (I-P))(y)= \, x,\ \mbox{where} \ y = Px + \frac{1}{ \lambda}
(I-P)x , \,\,\, \forall \,\, x \in X.\]

In \cite{FIL}, the authors show that a generalized bi-circular projection
on finite dimensional spaces is equal to  the average of the identity with
an isometric reflection. This interesting result was extended further to
many other settings, as for example  spaces of continuous functions on a
compact, connected and  Haursdorff space, $C(\Omega)$ and $C(\Omega,X)$,
where generalized bi-circular projections are also represented as the average
of the identity with an isometric reflection, see \cite{BJ1} and  \cite{FR}.
The same characterization also holds for generalized bi-circular projections
on spaces of Lipschitz functions, see \cite{BJ2} and \cite{VCV} and for
$L_p$-spaces, $1 \leq p < \infty, p \neq 2$, see \cite{L}.
This raises the question whether every GBP on a Banach space is equal to the
average of the identity with an isometric reflection. The answer to this question
is negative as we show in example (\ref{gbp_ne_(I+R):2}).

It is easy to see that  there is a bijection between the set of all reflections on $X$
and the set of all projections on $X$. If  $P=\frac{Id+R}{2}$,
with $R$ an isometric reflection, is a GBP, then $R$ is   the identity
on the range of $P$ and  $-I$ on the kernel of $P$.

In this note we show that given a GBP $P$ on an arbitrary
complex Banach space, $P$ is hermitian or $P$  is the average of the identity with a
reflection $R,$ with $R$ an element in the algebra generated by the isometry associated
with $P$. We give examples that show that the reflection defined by a GBP is not necessarily
an isometry. Moreover, we also show that every projection on $X$ is a GBP relative to some
renorming of the underlying space $X$. Therefore in this new  space, $P$ can be
represented as the average of the identity with an isometric  reflection.

In section 3 we characterize projections written as combinations of
iterates of a finite order operator and we relate those to the
generalized $n$-circular projections discussed in \cite{B} and also
in \cite{AD}. In section 4 we derive the standard form for
generalized bi-circular projections on $C_0(\Om,X)$, with $\Om$  a
locally compact Hausdorff space (not necessarily connected) and $X$
a Banach space with trivial centralizer.

\section{A characterization of generalized bi-circular
projection on a complex Banach space}

Throughout this section $X$ denotes a complex Banach space and $P$ a
bounded linear projection on $X$. We recall that $P$ is a generalized
bi-circular projection if and only if there exists a modulus 1
complex number $\lambda \neq 1$ such that $P + \lambda (I-P)$ is an
isometry on $X$.

We observe that given an arbitrary  projection $P$ on $X$, $2P-I$ is a reflection and
thus $P$ can be represented as the average of the $I$ with a reflection, i.e.
$P= \frac{I+ (2P-I)}{2}.$
In particular,  generalized bi-circular
projections on $X$ are  averages of the identity with  reflections.
We  recall that a reflection $R$ on $X$ is  a bounded linear
operator such that $R^2=I.$ An isometric reflection is both a reflection
and an isometry. The next result represents the reflection determined by a GBP in
terms of the surjective isometry defined by the projection.

\begin{prop} \label{mt1}

Let $X$ be a Banach space. If $P$ is a projection such that
$P+\la (I-P)=T$, where $\la \in  \bT \setminus \{1\}$ and $T$ is
an isometry on $X,$ then
$P=\f{I+R}{2}$, with  $R,$  a reflection on $X,$ in the algebra generated by $T$.

\end{prop}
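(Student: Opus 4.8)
The relation $P+\la(I-P)=T$ is affine in $P$ and, since $\la\neq 1$, invertible, so the plan is simply to solve for $P$ and read off the reflection it determines. Rewriting the hypothesis as $T=\la I+(1-\la)P$ gives $P=\f{1}{1-\la}\,(T-\la I)$. I would then set $R:=2P-I$, so that $P=\f{I+R}{2}$ holds by construction and, substituting the expression for $P$,
\[ R=\f{2}{1-\la}\,T-\f{1+\la}{1-\la}\,I, \]
an affine function of $T$; in particular $R$ belongs to the (unital) algebra generated by $T$.

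The only point left to verify is that $R$ is a reflection, and this is automatic from $P^{2}=P$: indeed $R^{2}=(2P-I)^{2}=4P^{2}-4P+I=I$. So no step here is genuinely difficult; note that the isometry hypothesis on $T$ is not even needed for the bare conclusion, only to make the hypothesis non-vacuous. The one delicate point is the intended reading of ``the algebra generated by $T$'': the computation above places $R$ in the \emph{unital} subalgebra generated by $T$, but one can do better and also connect the statement to the dichotomy announced in the introduction.

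For that refinement I would use that $P$ and $I-P$ are orthogonal idempotents, so $(P+\la(I-P))^{k}=P+\la^{k}(I-P)$ for every $k\ge 1$, i.e.\ $T^{k}=\la^{k}I+(1-\la^{k})P$. If $\la$ is an $n$-th root of unity then $T^{n}=I$, whence $I=T^{n}$, $P=\f{1}{1-\la}(T-\la T^{n})$ and $R$ all lie in the subalgebra generated by $T$ even without adjoining the identity. If $\la$ is not a root of unity then $\{\la^{k}:k\ge 1\}$ is dense in $\bT$, and since $\|T^{k}-(P+\m(I-P))\|=|\la^{k}-\m|\,\|I-P\|$, for every $\m\in\bT$ the operator $P+\m(I-P)$ is a norm limit of the isometries $T^{k}$, hence an isometry, and automatically surjective by the inversion formula recalled in the introduction; thus $P$ is bi-circular and so norm hermitian. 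The upshot is that the proposition reduces to the one-line inversion, with the root-of-unity case giving the sharper placement of $R$ and the remaining case being subsumed by the hermitian alternative, so I do not anticipate a serious obstacle.
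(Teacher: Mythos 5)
Your proof is correct, and it takes a genuinely different route from the paper's. The paper argues by cases on the order of $\la$: for $\la$ not a root of unity it invokes density of $\{\la^n\}$ in $\bT$ to conclude $P$ is bi-circular (citing Lin), for $\la$ of even order it gets $T^k=2P-I$ directly, and for $\la$ of odd order $2k+1$ it sums the identities $T^j=P+\la^j(I-P)$ to obtain $(2k+1)P=I+T+\cdots+T^{2k}$ and hence a reflection $R=\frac{(1-2k)I+2T+\cdots+2T^{2k}}{2k+1}$ of degree $2k$ in $T$. You instead invert the affine relation $T=\la I+(1-\la)P$ in one line, obtaining $R=2P-I=\frac{2}{1-\la}T-\frac{1+\la}{1-\la}I$ as a degree-one polynomial in $T$; since $T$ satisfies $T^2-(\la+1)T+\la I=0$, this is the same operator as the paper's $R$, just expressed minimally. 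Your version is shorter, uniform in $\la$, and makes explicit that the isometry hypothesis plays no role in the bare algebraic conclusion (a point the paper leaves implicit); your refinement for the root-of-unity case, and your density argument showing $P$ is bi-circular (hence hermitian) otherwise, reproduce exactly the dichotomy the paper states in its introduction and handles in its irrational case. What the paper's longer computation buys is the explicit representation $P=\frac{1}{2k+1}(I+T+\cdots+T^{2k})$, which is the generalized $n$-circular form exploited in its Section 3 and in the remark and example following the proposition; your argument does not surface that formula, but it is not needed for the statement as posed. The only caveat is the reading of ``algebra generated by $T$'': your basic computation places $R$ in the unital algebra, and your refinement correctly upgrades this to the non-unital one when $\la$ is a root of unity, which is the only case where the paper itself produces an explicit $R$.
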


\begin{proof}

Since $\la$ is a modulus one complex number, it is of the form
$e^{2\pi \theta i}$ with $\theta$ a real number in the interval $[0,1)$.
Therefore, we consider the following  two cases: (i) $\theta$
is an irrational number, and (ii) $\theta$ is a rational number.
If $\theta$ is an irrational, then the sequence $\{\la ^{n}\}_n$ is
dense in the unit circle. This implies that  $P$ is  a bi-circular
projection since for every $\alpha \in \bT $,  $P+\alpha (I-P)$ is a
surjective isometry, cf. \cite{L}. If $\theta$ is a rational number,
we first assume that $\la$ is of even order. Thus for some
positive integer $k$, $\la^{k}=-1$,
$P+\la^k (I-P)=T^k$ and $P+\la^{2k} (I-P)=I=T^{2k}$. Consequently, $P$
is represented as the average of the identity with the isometric reflection
$T^k$. If $\la$ is of odd order, let $2k+1$ be the smallest positive
integer such that $\la^{2k+1}=1$. Therefore
\begin{equation} \label{eq1} P+\la^j (I-P)=T^j, \,\, \forall \,\,j=1,
\ldots 2k+1. \end{equation}
This implies that $T^{2k+1}=I.$ Furthermore adding the equations
displayed in (\ref{eq1}), we get
\[ (2k+1) P+ (1+\lambda+ \lambda^2+\cdots +\lambda^{2k})(I-P)=
I+T+T^2 + \ldots + T^{2k}.\]
This equation becomes
\begin{equation} \label{eq2} (2k+1) P=I+T+T^2 + \ldots + T^{2k}, \end{equation}
since $1+\lambda+ \lambda^2+\cdots +\lambda^{2k}=0.$
The equation displayed in (\ref{eq2}) implies that
\[ P= \frac{1}{2k+1} \left( I + T + \cdots + T^{2k} \right) = \frac{I + R } {2}, \]
with $ \begin{displaystyle} R = \frac{(1-2k) I + 2T + \cdots + 2T^{2k}}{2k+1}.\end{displaystyle}$

It is a straightforward calculation to check that $R^2=I.$
This completes the proof.

\end{proof}

\begin{rem}

It follows from the proof given for the Theorem \ref{mt1} that for $\theta$
 irrational the projection $P$ is bi-circular then hermitian.
We now give an example that shows the converse of the implication in Theorem
\ref{mt1} does not hold. We consider $X$ the space of all convergent sequences
in $\mathbb{C}$ with the sup norm. Let $T: X \rightarrow X$ be given by
$T (x_1, x_2, x_3, x_4 , \cdots )= (x_2, x_3, x_1, x_4, \, \cdots),$ which involves a
permutation of the first three positions of a sequence in $X$ and the identity at any other position.  It is
clear that $T$ is a surjective isometry and $P = \f{I+T+T^2}{3}$ is a
projection. As defined in the proof given for the Theorem \ref{mt1}, we set
$R= \f{-I+2T+2T^2}{3}.$  The projection $P$ is equal to $\f{I+R}{2}$ and
$R:X \rightarrow X$ is s.t. $R(x_1, x_2, x_3, x_4 , \cdots )= \f{1}{3}(-x_1+2x_2+2x_3,
\, 2x_1-x_2+2x_3,\,  2x_1+2x_2-x_3, 3x_4, \, \cdots).$ Therefore, $R(0,1,1,0, \cdots )=
\f{1}{3} (4,1,1,0,0 \cdots )$.
This shows that $R$ is not an isometry. It is easy to check that $P$ is not a GBP.
Given $\lambda$ of modulus $1$ and $\lambda\neq 1$, we set
$S=(1-\lambda) P + \lambda \, I.$ In particular, \[S(1,0,0,0,\cdots )= \left(\frac{1}{3} + \frac{2}{3}
\lambda, \frac{1}{3} - \frac{1}{3} \lambda, \frac{1}{3} - \frac{1}{3} \lambda, 0, \cdots\right).\]
If $S$ was  an isometry on $X$, then $ \max \{ |\frac{1}{3} + \frac{2}{3} \lambda |,|\frac{1}{3} - \frac{1}{3} \lambda| \}=1$.
We observe that $|\frac{1}{3} - \frac{1}{3} \lambda| < 1$ and if $|\frac{1}{3} + \frac{2}{3} \lambda | = 1$,
then $\lambda =1.$   This contradiction shows that $P$ is not a GBP.

\end{rem}

Given a projection it is of interest  to determine whether $P$ is a generalized
bi-circular projection or equivalently whether the reflection determined by $P$ is an isometry.
We address this question in our next result.

\begin{prop}\label{Tisaniso}

Let $X$ be a Banach space. If $P$ is a projection on $X$ such that $T=P+\lambda (I-P)$, for some
$\lambda \in \mathbb{T}\setminus \{1\}$. Then, $T$ is an isometry if and only if
$\|x-y\| = \|x-\lambda y\|$, for every $x \in \, \mbox{Range} (P)$ and
$y \in \, \mbox{Ker} (P)$.

\end{prop}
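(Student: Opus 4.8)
The key observation is that every $x \in X$ decomposes uniquely as $x = Px + (I-P)x$, where $Px \in \mathrm{Range}(P)$ and $(I-P)x \in \mathrm{Ker}(P)$. Writing $u = Px$ and $v = (I-P)x$, we have $x = u+v$ and
$$Tx = P(u+v) + \lambda(I-P)(u+v) = u + \lambda v,$$
since $Pu = u$, $Pv = 0$. So $T$ being an isometry means precisely that $\|u+v\| = \|u+\lambda v\|$ for all $u \in \mathrm{Range}(P)$, $v \in \mathrm{Ker}(P)$. That is almost the claimed statement already, except the proposition is phrased with differences $\|x-y\| = \|x-\lambda y\|$ rather than sums. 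So the real content is just a change of sign: I would pass from $\|u+v\|=\|u+\lambda v\|$ to $\|x-y\|=\|x-\lambda y\|$ by the substitution $u = x$, $v = -y$ (legitimate because $\mathrm{Range}(P)$ and $\mathrm{Ker}(P)$ are linear subspaces, so $-y$ ranges over $\mathrm{Ker}(P)$ exactly when $y$ does), together with $\|u+\lambda v\| = \|x - \lambda y\|$.

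Concretely, the forward direction: assume $T$ is an isometry. Fix $x \in \mathrm{Range}(P)$ and $y \in \mathrm{Ker}(P)$, and apply $T$ to the element $x - y$. Since $P(x-y) = x$ and $(I-P)(x-y) = -y$, we get $T(x-y) = x - \lambda y$, hence $\|x - y\| = \|T(x-y)\| = \|x - \lambda y\|$. For the converse: assume $\|x-y\| = \|x - \lambda y\|$ for all such $x,y$. Given an arbitrary $z \in X$, set $x = Pz \in \mathrm{Range}(P)$ and $y = -(I-P)z \in \mathrm{Ker}(P)$, so that $z = x - y$ and $Tz = x - \lambda y$; the hypothesis gives $\|Tz\| = \|z\|$, so $T$ is an isometry. (Surjectivity of $T$ is automatic and was already noted in the introduction, but here only the norm-preserving property is at issue.)

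There is essentially no obstacle: the statement is a direct unwinding of the definition of $T$ via the decomposition $X = \mathrm{Range}(P) \oplus \mathrm{Ker}(P)$, and the only point requiring a word of care is the cosmetic sign change between the natural "$+$" formulation and the "$-$" formulation in the statement, which is handled by replacing $y$ with $-y$ in the kernel (a subspace). I would keep the write-up to a few lines, emphasizing the identity $T(x-y) = x - \lambda y$ for $x \in \mathrm{Range}(P)$, $y \in \mathrm{Ker}(P)$ as the crux of both implications.
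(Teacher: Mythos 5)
Your proof is correct and follows essentially the same route as the paper's: decompose $X = \mbox{Range}(P) \oplus \mbox{Ker}(P)$, note that $T$ acts as the identity on the range and as multiplication by $\lambda$ on the kernel, and read off both implications from the identity $T(x-y) = x - \lambda y$. The paper states this more tersely (dismissing the converse as ``straightforward computations''), whereas you spell out both directions explicitly, but there is no substantive difference.
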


\begin{proof}

The projection $P$ determines two closed subspaces $\mbox{Range} (P)$ and $\mbox{Ker} (P)$ such that
$X=\mbox{Range} (P) \oplus \mbox{Ker} (P)$. Since $T$ is an isometry, $\|x-y\|=\|Tx-Ty\|$ for every
$x$ and $y$ in $X$. In particular for $x$ in the range of $P$ and $y$ in the kernel of $P$, we have
$Tx= x$ and $Ty=\lambda y$. The converse follows from straightforward computations.

\end{proof}

\begin{rem}\label{gbp_norm_iden}

If is a consequence of Proposition \ref{Tisaniso} that if $P$ is a generalized bi-circular
projection on $X$, then $P$ is the average of the identity with an isometric reflection
if and only if  for every $x \in \, \mbox{Range} (P)$ and
$y \in \, \mbox{Ker} (P)$, $\|x-y\| = \|x+y\|.$

\end{rem}

The next proposition asserts that every projection on a Banach space is a generalized bi-circular projection
in some equivalent renorming of the given space.

\begin{prop} \label{cor_mt1}

Let $X$ be a complex Banach space and $P$ be a projection on $X$. Then $X$ can be equivalently renormed
such that $R$ is an isometric reflection and consequently $P$ is a generalized bi-circular projection.

\end{prop}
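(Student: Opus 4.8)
The plan is to construct the new norm directly from the reflection $R = 2P - I$. Since $R^2 = I$, the operator $R$ has spectrum contained in $\{1,-1\}$, and $\mathrm{Range}(P) = \ker(R - I)$, $\ker(P) = \ker(R+I)$ give the eigenspace decomposition $X = \mathrm{Range}(P) \oplus \ker(P)$. I would define
\[
\vertiii{x} = \max\{\,\|Px\|,\ \|(I-P)x\|\,\},
\]
(or, if one prefers a norm that is additive on the two summands, $\vertiii{x} = \|Px\| + \|(I-P)x\|$; either works). The first step is to verify that $\vertiii{\cdot}$ is a genuine norm on $X$: homogeneity and the triangle inequality are immediate from the corresponding properties of $\|\cdot\|$ together with linearity of $P$, and $\vertiii{x} = 0$ forces $Px = 0$ and $(I-P)x = 0$, hence $x = 0$.

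The second step is to check that $\vertiii{\cdot}$ is equivalent to the original norm. For the lower bound, $\vertiii{x} \le \max\{\|P\|,\|I-P\|\}\,\|x\|$. For the upper bound, $\|x\| = \|Px + (I-P)x\| \le \|Px\| + \|(I-P)x\| \le 2\vertiii{x}$ (with the max version) or $\le \vertiii{x}$ (with the sum version). Hence the two norms induce the same topology, so $(X,\vertiii{\cdot})$ is again a Banach space and $P$ is still a bounded projection on it.

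The third step is the heart of the matter: $R$ is an isometry for $\vertiii{\cdot}$. Since $P R = P(2P - I) = 2P - P = P$ and likewise $(I-P)R = -(I-P)$, we get $\|P(Rx)\| = \|Px\|$ and $\|(I-P)(Rx)\| = \|-(I-P)x\| = \|(I-P)x\|$, so $\vertiii{Rx} = \vertiii{x}$ for all $x$. As $R^2 = I$, $R$ is a surjective isometric reflection in the new norm. Finally, applying Remark \ref{gbp_norm_iden} (or simply observing directly that $\vertiii{x - y} = \max\{\|x\|,\|y\|\} = \vertiii{x+y}$ for $x \in \mathrm{Range}(P)$, $y \in \ker(P)$), we conclude that $P = \frac{I+R}{2}$ exhibits $P$ as the average of the identity with an isometric reflection, so $P + (-1)(I-P) = R$ is an isometry and $P$ is a generalized bi-circular projection on $(X,\vertiii{\cdot})$.

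I do not expect any serious obstacle here; the only point requiring a moment's care is confirming that the renormed space is complete, which follows automatically from norm equivalence, and checking the algebraic identities $PR = P$ and $(I-P)R = -(I-P)$ that make the isometry computation work. If one wanted the new norm to additionally agree with the old one on each summand separately, the max (or sum) formula already achieves that, since $\vertiii{x} = \|x\|$ whenever $x$ lies entirely in $\mathrm{Range}(P)$ or entirely in $\ker(P)$.
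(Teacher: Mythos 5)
Your proof is correct, but it takes a different route from the paper's. The paper works directly with $R=2P-I$: it notes $R^2=I$, invokes the Open Mapping Theorem to get $\alpha\|x\|\le\|Rx\|\le\beta\|x\|$, and then symmetrizes the norm over the two-element group generated by $R$, setting $\|x\|_1=\|x\|+\|Rx\|$, so that $\|Rx\|_1=\|Rx\|+\|R^2x\|=\|x\|_1$. You instead diagonalize $R$ via the decomposition $X=\mathrm{Range}(P)\oplus\mathrm{Ker}(P)$ and take a norm, $\max\{\|Px\|,\|(I-P)x\|\}$ (or the sum), that is a symmetric function of the two components; $R$ is then an isometry because it fixes the first component and negates the second. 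Both arguments are complete and elementary. The paper's averaging trick is the more portable one --- it makes any finite-order bounded operator an isometry by summing (or taking the max) over its iterates, which is exactly what the authors do again in Example \ref{gbp_ne_(I+R):2} with $\max\{\|x\|_\infty,\|Tx\|_\infty,\|T^2x\|_\infty\}$ for an order-three $T$ --- whereas your construction is tied to the rank-one spectral decomposition $R=P-(I-P)$. What your norm buys in return is the extra feature you note at the end: it restricts to the original norm on each of the two summands (the paper's $\|\cdot\|_1$ doubles the norm on $\mathrm{Range}(P)$), and it makes the verification of the condition in Remark \ref{gbp_norm_iden}, namely $\|x-y\|=\|x+y\|$ for $x\in\mathrm{Range}(P)$ and $y\in\mathrm{Ker}(P)$, immediate. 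One stylistic remark: you do not actually need the Open Mapping Theorem or the bounds $\|P\|,\|I-P\|<\infty$ beyond the continuity of $P$ itself, so your equivalence-of-norms step is, if anything, slightly leaner than the paper's.
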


\begin{proof}
We set $R=2P-I$.
We  observe that $R^2=I$ which implies that $R$ is bounded and bijective. Then, the Open Mapping Theorem implies that
$R$ is an isomorphism. Therefore, there exist $\alpha$ and $\beta$ positive numbers such that,
for every $x \in X,$
\[ \alpha \|x\| \leq \|R(x)\| \leq \beta \|x\|.\]
We define  $\|x\|_1= \|x\|+\|R(x)\|, $   for all $x \in X.$ This new norm is equivalent to
the original norm on $X$ and $R$ relative to this norm is an isometry. In fact, given
$x \in X$,  $\| R(x)\|_1= \|R(x)\| + \| R(R(x))\| = \|x\|_1.$

\end{proof}

\begin{ex} \label{gbp_ne_(I+R):2}

We now give an example of a GBP that can not be represented as the average of identity
with an isometric reflection.
Let $X$ be $\mathbb{C}^3$ with the max norm, $\|(x,y,z)\|_{\infty} =
\max \{ |x|, |y|, |z|\}$ and $\lambda = exp(\frac{2\pi i}{3})= -\frac{1}{2}+i
\frac{\sqrt{3}}{2}.$ We consider $P$ the following projection on $\mathbb{C}^3:$
\[ P(x,y,z)= \frac{1}{3} ( x+y+z, x+y+z,x+y+z ) .\]

Let $T= P+\lambda (I-P)$ . Straightforward computations imply that
\[T(x,y,z)= \left( a\, x + b\, (y+z),  a\, y + b\, (x+z),a\, z + b\, (x+y) \right),\]
with $a=\frac{i\sqrt{3}}{3}$ and $b=\frac{1}{2} - \frac{\sqrt{3}i}{6}.$

Since  $T(0,0,1)= \left( b, b, a \right)$, $T$ is not an isometry. In fact,
$\|(0,0,1)\|_{\infty} = 1 $ and $\|T(0,0,1)\|_{\infty}= \frac{\sqrt{3}}{3}\neq
\|(0,0,1)\|_{\infty}.$ The isomorphism $T$ has order $3$ since $\lambda^3=1$.

We now renorm $\mathbb{C}^3$ so $T$ becomes an isometry. The new norm is defined
as follows:
\[ \| (x,y,z)\|_* = \max \{ \| (x,y,z)\|_{\infty}, \, \| T(x,y,z)\|_{\infty},
\| T^2(x,y,z)\|_{\infty} \}.\]

Therefore $P$ is a generalized bi-circular projection in $\mathbb{C}^3$ with the
norm $\| \cdot \|_*,$ for $\lambda=exp(\frac{2\pi i}{3}).$   This projection can
not be written as the average of the identity with an isometric reflection. We
assume otherwise, then  $P=\frac{I+R}{2}$ and
$R= \frac{-I+2T+2T^2}{3}$. We now show that $R$ is not an isometry. Previous
calculations imply that  $T(0,0,1)= ( b, b, a )$ and $T^2(0,0,1)= (b^2+2ab,
b^2+2ab, a^2+2b^2)= (\overline{b},\overline{b},\overline{a}).$ Therefore
$R(0,0,1)=(2/3,2/3,-1/3)$, $(TR)(0,0,1) = \frac{1}{3} \left( \frac{1}{2}+
\frac{\sqrt{3}}{2} i, \frac{1}{2}+ \frac{\sqrt{3}}{2} i, 2-i\sqrt{3}
\right)$, and $(T^2 R)(0,0,1) = \frac{1}{3} \left( \frac{1}{2}-
\frac{\sqrt{3}}{2} i, \frac{1}{2}- \frac{\sqrt{3}}{2} i, 2+i\sqrt{3} \right)$.

Since $\|R(0,0,1)\|_{\infty} = \frac{2}{3}$, $\|(TR)(0,0,1)\|_{\infty} =\|
(T^2 R)(0,0,1) \|_{\infty}= \frac{\sqrt{7}}{3},$ we now conclude that
\[ \|R(0,0,1)\|_* = \max \left\{\frac{2}{3}, \frac{\sqrt{7}}{3}\right\}= \frac{\sqrt{7}}{3}
\neq \|(0,0,1)\|_*=1.\]

\end{ex}

It is worth mentioning that  the projection $P$ above does not satisfy the condition
stated in Remark \ref{gbp_norm_iden}.  For example, if $x=(1,1,1) \in Range(P)$,
$y=(1,1,-2) \in Ker(P),$ we have $\|x+y\|_* =\sqrt{7}$ and $\|x-y\|_*=3.$

\section{Projections as combinations of finite order operators}

In this section we investigate the existence of projections defined as linear
combinations of the iterates of a given finite order operator. We conclude in
our forthcoming Proposition \ref{mp1} that only  certain  averages yield
projections. For a generalized bi-circular projection $P,$ we consider the set
$\Lambda_P = \{ \la \in  \bT : P+\la (I-P) \,\, \mbox{is an isometry}\}$.
This set is a group under multiplication. An inspection of the proof provided
for the Theorem \ref{mt1} also shows that the multiplicative group associated
with a GBP is either finite or equal to $\bT$.  If $\Lambda_P$ is infinite,
then $P$ is  a bi-circular projection. We give some examples of GBPs together
with their multiplicative groups.

\begin{ex}

\begin{enumerate}

\item  We consider $\ell_{\infty}$ with the usual sup norm. Let $P$ be
defined as follows:
\[ P(x_1, x_2,x_3,  \cdots \,) = \left( \frac{x_1+x_2}{2},\frac{x_1+x_2}{2},
x_3, \cdots \, \right).\]
we show that $\Lambda_P = \{1,-1\}.$  Given $\la \in \bT$ such that
$T=P+ \la (I-P) $ is a surjective isometry, then
\[T (x_1, x_2,x_3,
\cdots \,)= \left( \frac{(\la +1)x_1+ (1-\la)x_2}{2},
\frac{(\la +1)x_2+ (1-\la)x_1}{2}, x_3, \cdots \, \right).\]
We recall that a surjective isometry on $\ell_{\infty}$, $S:\ell_{\infty}
\rightarrow \ell_{\infty}$ is of the form
\[ S(x_1, x_2,x_3,  \cdots \,)= (\mu_1 x_{\tau(1)}, \mu_2 x_{\tau(2)} , \cdots ),\]
with $\tau$ a bijection of $\mathbb{N}$  and $\{\mu_i\}$ is a sequence of
modulus 1 complex numbers.

Therefore  $T$ is an isometry if and only if $\la = \pm 1$.

\item  Let $P$ and $T$ on $(\mathbb{C}^3, \|\cdot \|_*)$ be defined as in example (\ref{gbp_ne_(I+R):2}).
Then
$\Lambda_P = \{1, exp(\frac{2 \pi i}{3}), exp (\frac{4 \pi i }{3})\}$.
Since, $T=P+exp(\frac{2 \pi i}{3})(I-P)$ is an isometry on $(\mathbb{C}^3, \|\cdot \|_*)$,
then $T^2=P+exp(\frac{4 \pi i}{3})(I-P)$ is also an isometry and $\Lambda_P \supseteq \{1,
exp(\frac{2 \pi i}{3}), exp (\frac{4 \pi i }{3})\}$. We now show that $\Lambda_P = \{1, exp(\frac{2 \pi i}{3}),
exp (\frac{4 \pi i }{3})\}$. As in example (\ref{gbp_ne_(I+R):2}), let $\la = exp(\frac{2 \pi i}{3})$.
Given $\lambda_0=a_0+i b_0$ of modulus 1, such that $\lambda_0 \notin \{1, exp(\frac{2 \pi i}{3}),
exp (\frac{4 \pi i }{3})\}$,  we set $S=P+\lambda_0 (I-P).$ Therefore,
\[ S(x,y,z)= \f{1}{3} (cx + d(y+z),cy + d(x+z),cz + d(x+y)),\] with
$c = 1+2 \la_0$ and $d = 1- \la_0$ and
$$ \| S(0,0,1)\|_* = \max \{ \| S(0,0,1)\|_{\infty}, \| TS(0,0,1)\|_{\infty}, \| T^2S(0,0,1)\|_{\infty} \}.$$
Now, $S(0,0,1) = \f{1}{3}(d,d,c)$, $TS(0,0,1)= \f{1}{3}(1- \la_0
\la,1- \la_0 \la,1+ 2 \la_0 \la)$ and $T^2S(0,0,1)= \f{1}{3}(1-
\la_0 \la^2,1- \la_0 \la^2,1+ 2 \la_0 \la^2)$. It is easy to see
that each of $|\f{1- \la_0}{3}|$, $|\f{1- \la_0 \la}{3}|$ and
$|\f{1- \la_0 \la^2}{3}|$ is strictly less than 1. Moreover, if any
of $| \f{1+2 \la_0}{3}|$, $|\f{1+ 2 \la_0 \la}{3}|$ or $|\f{1+ 2
\la_0 \la^2}{3}|$ is equal to 1, then $\la_0 = 1, \,
\la_0=\overline{\la}$ or $\la_0=\overline{\la}^2,$ respectively.
This leads to a contradiction. It also follows from calculations
already done for the  example (\ref{gbp_ne_(I+R):2}) that
$\|(0,0,1)\|_*=1.$ Therefore, $\|S(0,0,1)\|_* \neq \|(0,0,1)\|_*$
and hence $\la_0 \notin \La_P$.

\end{enumerate}

\end{ex}

The next corollary follows from our proof presented for Proposition \ref{mt1}.

\begin{cor}

Let $X$ be a Banach space. If the order of the multiplicative group
of a generalized bi-circular projection $P$ on $X$ is even then $P$
is the average of the identity with an isometric reflection.

\end{cor}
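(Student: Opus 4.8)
The plan is to read off this corollary directly from the case analysis in the proof of Proposition~\ref{mt1}. Suppose $P$ is a generalized bi-circular projection whose multiplicative group $\Lambda_P$ is finite of even order; fix $\la \in \Lambda_P$ a generator, so $\la$ is a primitive root of unity of even order $2m$, and $T = P + \la(I-P)$ is a surjective isometry. The key observation is that $\la^m = -1$ (a primitive $2m$-th root of unity raised to the $m$-th power is a primitive square root of unity, hence $-1$).

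Next I would compute $T^m$. Since $P + \mu(I-P)$ is multiplicative in $\mu$ on the two complementary subspaces $\mathrm{Range}(P)$ and $\mathrm{Ker}(P)$ — which is exactly the identity $(P+\la(I-P))^k = P + \la^k(I-P)$ used repeatedly in the proof of Proposition~\ref{mt1} — we get $T^m = P + \la^m(I-P) = P - (I-P) = 2P - I$. Because $T$ is an isometry, so is $T^m$; and $(2P-I)^2 = 4P^2 - 4P + I = I$ since $P^2 = P$. Hence $R := 2P - I = T^m$ is an isometric reflection, and $P = \frac{I + R}{2}$ is the average of the identity with an isometric reflection, as claimed.

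The only subtlety — and it is minor — is making sure that the group being even order really forces a generator $\la$ with $\la^m = -1$; this is immediate once one recalls that a finite subgroup of $\bT$ is cyclic, generated by a primitive $n$-th root of unity where $n = |\Lambda_P|$, and that $n$ even gives the element $-1 = e^{\pi i}$ in the group with $(-1)$ arising as $\la^{n/2}$. Equivalently, one can invoke verbatim the even-order sub-case already treated inside the proof of Proposition~\ref{mt1}, where exactly this reflection $T^k$ (with $\la^k = -1$) is exhibited; the corollary is then just the statement that this sub-case is the one that occurs precisely when $|\Lambda_P|$ is even. I expect no real obstacle here: the content has already been established, and the proof is a two-line bookkeeping argument extracting it.
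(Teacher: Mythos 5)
Your argument is correct and is essentially the paper's own: the authors prove this corollary simply by pointing back to the even-order sub-case in the proof of Proposition \ref{mt1}, where $\la^k=-1$ yields the isometric reflection $T^k=2P-I$, exactly as you extract. Your added remark that a finite subgroup of $\bT$ of even order is cyclic and contains $-1$ correctly bridges the hypothesis on $|\Lambda_P|$ to that sub-case, so nothing is missing.
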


We also recall the definition of a generalized $n$-circular projection, cf. \cite{B}.
A projection $P$ on $X$ is generalized $n$-circular if and only if
there exists a surjective isometry $T$ such that $T^n=I$ and
\[ P = \f{I+T+T^2+ \cdots \, +T^{n-1}}{n}. \]

Another notion of generalized
$n$-circular projection was defined in \cite{AD} and it was shown there that
both the definitions are equivalent in $C(\Om)$,
where $\Om$ is a compact Hausdorff connected space.
In fact, they are equivalent in any space in which the GBPs are given as
the average of identity with an isometric reflection, see \cite{AD1}.

We observe that for a surjective linear map $T$ on $X$ such that
$T^n=I$ (not necessarily an isometry), $\f{I+T+T^2+ \cdots \,
+T^{n-1}}{n}$ is a projection. The same question applies to this
situation; which spaces support only $n$-circular projections
associated with surjective isometries?

We now show a result concerning existence of projections written
as a linear combination of operators with a cyclic property.

\begin{Def}

An operator $T$ on $X$ is  of order $k$
(a positive integer) if and only if $T^k=I$ and $T^i \neq I$ for any $i<k$.

\end{Def}

We observe that if $T$ is of order k, then $P= \frac{I+T+T^2+ \cdots +
T^{k-1}}{n}$ is a projection. The following proposition answers the
reverse question whether a combination of such a collection of operators
yields any projection.

Before stating our result we set some useful notation as introduced in the book by Michael Frazier,
\cite{fr}. We define $\rho= e^{-2 \pi i/k}.$  Then $\rho^{mn} = e^{-2 \pi mn i /k}$  and
$\rho^{-mn} = e^{2 \pi mn i /k}.$ In this notation, given a $k$-tuple $z= (z(0), \, \cdots, \, z(k-1))$
we set $\hat{z} (m) = \sum_{n=0}^{k-1} z(n) \rho^{mn}.$
We now denote by $W_k$ the $k$-square matrix with the $(i,j)$ entry equal to $\rho^{(i-1)(j-1)}$. In expanded form
\[ W_k= \left[ \begin{array}{cccccc} 1& 1& 1 &1 & \cdots & 1 \\ 1 & \rho & \rho^2 & \rho^3 & \cdots & \rho^{k-1} \\
1 & \rho^2 & \rho^4 & \rho^6 & \cdots & \rho^{2(k-1)} \\
1 & \rho^3 & \rho^6 & \rho^9 & \cdots & \rho^{3(k-1)} \\
\vdots & \vdots & & & & \vdots \\
1 & \rho^{k-1} & \rho^{2(k-1)} & \rho^{3(k-1)} & \cdots &
\rho^{(k-1)(k-1)} \end{array} \right]. \] Regarding  $z$ and
$\hat{z}$ as column vectors  we have  $\hat{z}= W_k z$. It is easy
to see that $W_k$ is invertible. The $(i,j)$-entry of  $W_k^{-1}$ is
equal to $\frac{1}{k}\overline{\rho}^{(i-1)(j-1)}.$ Frazier
designates  $\hat{z}$  the ``discrete Fourier transform" of $z$,
i.e., $\hat{z}=DFT (z)$, and $z$ is the ``inverse discrete Fourier
transform" of $\hat{z}$ , i.e., $z=W_k^{-1}\hat{z}=IDFT (\hat{z})$.
If $S$ is a subset of $\{0, \cdots , k-1\}$, we  denote by
$\delta_S$ the vector with components given by $\delta(i)=1 $  for
$i \in S$ and  $\delta(i)=0 $ otherwise.

\begin{prop} \label{mp1}

Let $X$ be a Banach space and $P$ a bounded operator on $X$. Let
$\lambda_0, \,\,\cdots, \, \lambda_{k-1}$ be nonzero complex numbers
and $P= \sum_{i=0}^{k-1} \lambda_i \, T^i,$ where $T$ is an operator of
order $k$. Then, $P$ is a projection if and only if $\lambda = (\lambda_0, \, \lambda_1, \, \cdots , \lambda_{k-1})$
is the IDFT of $\delta_S$, for some $S \subseteq \{0, \cdots , k-1\}$.

\end{prop}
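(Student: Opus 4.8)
The plan is to diagonalize the operator $T$ via the discrete Fourier transform and reduce the projection condition to a statement about the coefficient vector. Since $T$ has order $k$, the ambient space decomposes (at least formally, via the spectral projections of $T$) into eigenspaces for the $k$-th roots of unity $\rho^{-m} = e^{2\pi i m/k}$, $m = 0, \ldots, k-1$. On such an eigenspace, $T$ acts as multiplication by $\rho^{-m}$, so $P = \sum_{i=0}^{k-1} \lambda_i T^i$ acts as multiplication by the scalar $\mu_m := \sum_{i=0}^{k-1} \lambda_i \rho^{-mi} = \widehat{\lambda}(-m)$ (in Frazier's notation, up to the sign convention in the exponent). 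The operator $P$ is a projection if and only if $P^2 = P$, which on each eigenspace reads $\mu_m^2 = \mu_m$, i.e. $\mu_m \in \{0,1\}$ for every $m$. This is precisely the statement that $\widehat{\lambda}$ is the indicator vector $\delta_S$ of the set $S = \{m : \mu_m = 1\}$, hence $\lambda = \mathrm{IDFT}(\delta_S)$.

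To make this rigorous without assuming $X$ is finite-dimensional, I would avoid talking about eigenspaces directly and instead argue purely algebraically. First I would compute $P^2$ by expanding: $P^2 = \sum_{i,j} \lambda_i \lambda_j T^{i+j} = \sum_{\ell=0}^{k-1} \left( \sum_{i+j \equiv \ell \, (k)} \lambda_i \lambda_j \right) T^\ell$, using $T^k = I$. The key linear-algebra fact I need is that $\{I, T, T^2, \ldots, T^{k-1}\}$ is linearly independent over $\mathbb{C}$ as a set of operators on $X$ — this follows because $T$ has order exactly $k$, so any nontrivial linear dependence $\sum c_\ell T^\ell = 0$ with minimal support would force a lower-order polynomial relation; more carefully, one multiplies through by powers of $T$ and uses a Vandermonde argument. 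Granting linear independence, $P^2 = P$ is equivalent to the system of $k$ scalar equations
\[ \sum_{\substack{i+j \equiv \ell \\ 0 \le i,j \le k-1}} \lambda_i \lambda_j = \lambda_\ell, \qquad \ell = 0, 1, \ldots, k-1. \]
This says exactly that the cyclic convolution $\lambda * \lambda$ equals $\lambda$.

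Next I would pass to the Fourier side. The DFT turns cyclic convolution into pointwise multiplication: $\widehat{\lambda * \lambda}(m) = \widehat{\lambda}(m)^2$. Hence the system above is equivalent to $\widehat{\lambda}(m)^2 = \widehat{\lambda}(m)$ for all $m$, i.e. each component of $\widehat{\lambda}$ lies in $\{0,1\}$, i.e. $\widehat{\lambda} = \delta_S$ where $S = \{m : \widehat{\lambda}(m) = 1\}$. Applying the inverse transform, $\lambda = W_k^{-1} \delta_S = \mathrm{IDFT}(\delta_S)$, which is the claim. Conversely, if $\lambda = \mathrm{IDFT}(\delta_S)$ then $\widehat{\lambda} = \delta_S$ satisfies $\widehat{\lambda}(m)^2 = \widehat{\lambda}(m)$, so reversing the steps gives $P^2 = P$; one should also note the hypothesis that all $\lambda_i$ are nonzero is automatically consistent since $\lambda_i = \frac{1}{k}\sum_{m \in S} \overline{\rho}^{im} = \frac{1}{k}\sum_{m \in S} \rho^{-im}$, and a remark may be needed that this forces $S \neq \emptyset$ and $S \neq \{0,\ldots,k-1\}$ to avoid trivial or degenerate cases, though the statement as written only requires the equivalence.

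The main obstacle is the linear independence of $I, T, \ldots, T^{k-1}$ as operators: on a general Banach space one cannot invoke a spectral decomposition, so this needs an honest argument. The cleanest route is: suppose $\sum_{\ell=0}^{k-1} c_\ell T^\ell = 0$; for each fixed $m \in \{0,\ldots,k-1\}$ consider the operator $Q_m = \frac{1}{k}\sum_{j=0}^{k-1} \rho^{mj} T^j$, check that $Q_m T = \rho^{-m} Q_m$ and $\sum_m Q_m = I$, so the $Q_m$ are the "spectral projections" of $T$ even in this purely algebraic setting; applying the dependence relation and using $T^\ell Q_m = \rho^{-m\ell} Q_m$ gives $\left(\sum_\ell c_\ell \rho^{-m\ell}\right) Q_m = 0$, i.e. $\widehat{c}(m) Q_m = 0$ for all $m$. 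If some $Q_m \neq 0$ we'd need $\widehat{c}(m) = 0$; and since the $Q_m$ sum to the identity, not all can vanish — but to conclude \emph{all} $\widehat{c}(m) = 0$ one needs that every $Q_m$ is nonzero, which may fail if $T$ genuinely omits some eigenvalue. This is actually fine: linear independence is not literally needed — what one needs is that $P^2 = P$ holds \emph{iff} $\widehat{\lambda}(m)^2 = \widehat{\lambda}(m)$ holds for every $m$ with $Q_m \neq 0$, and then the values on the "missing" indices can be chosen freely, but the nonzero-coefficient hypothesis together with the explicit IDFT formula pins things down. I would therefore present the argument via the operators $Q_m$, show $P = \sum_m \widehat{\lambda}(m) Q_m$ directly, and read off that $P$ is idempotent iff $\widehat{\lambda}(m) \in \{0,1\}$ on the support of the $Q_m$'s, then note this matches $\delta_S$; handling the indices where $Q_m = 0$ is the one delicate point and I expect the intended proof simply assumes enough (e.g. that all $Q_m \neq 0$, automatic when $T$ acts faithfully with full spectrum) or works in a setting where it is clear.
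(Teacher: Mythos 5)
Your proposal is correct and follows essentially the same route as the paper: the published proof invokes a theorem of Taylor to write $T = Q_0 + \rho Q_1 + \cdots + \rho^{k-1}Q_{k-1}$ with pairwise orthogonal projections $Q_i$, deduces $P = \sum_i \widehat{\lambda}(i)\,Q_i$ with $\widehat{\lambda} = \mathrm{DFT}(\lambda)$, and concludes from $P^2 = P$ that each $\widehat{\lambda}(i)$ is $0$ or $1$, exactly as you do with your explicitly constructed $Q_m = \frac{1}{k}\sum_j \rho^{mj}T^j$. The one delicate point you flag --- that some $Q_m$ may vanish when $T$ omits the eigenvalue $\rho^{-m}$, so that the ``only if'' direction is not forced for those indices --- is genuine, and the paper's proof passes over it silently by treating the inference $\alpha_i^2 Q_i = \alpha_i Q_i \Rightarrow \alpha_i^2 = \alpha_i$ as automatic.
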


\begin{proof}

If $P=\sum_{i=0}^{k-1} \lambda_i \, T^i$  and $T$ is an algebraic operator with annihilating polynomial $x^k-1$, a
Theorem due to Taylor (cf. \cite{ta} p. 317-318) asserts that
\[ T= Q_0 + \rho Q_1 + \cdots  + \rho^{k-1} Q_{k-1}\]
with $\{Q_0, \cdots , Q_{k-1}\}$ pairwise orthogonal projections. Since $T^i = Q_0 + \rho^i Q_1 + \cdots  + \rho^{i(k-1)} Q_{k-1}$
we conclude that $P= \alpha_0 Q_0+ \alpha_1 Q_1+\cdots +\alpha_{k-1} Q_{k-1}$ with the vector of scalars
$(\alpha_0, \cdots , \alpha_{k-1})$ equal to the $DFT (\lambda_0, \lambda_1, \cdots , \lambda_{k-1})$.
Since $P$ is a projection, i.e., $P^2=P$ and  $\{Q_0, \cdots , Q_{k-1}\}$ are pairwise orthogonal
projections we have that $\alpha_i^2=\alpha_i,$ for $i=0, \cdots,\, k-1.$ On the other hand,
\[ P= \sum_{i=0}^{k-1} \lambda_i \, T^i= \sum_{i=0}^{k-1} \,\left(\sum_{j=0}^{k-1} \lambda_j \rho^{ij} \right) Q_i,\]
thus for $i=0, \cdots , k-1$,  $\alpha_i=\sum_{j=0}^{k-1} \lambda_j \rho^{ij}$.
This implies that $(\lambda_0, \lambda_1,\, \cdots , \,\lambda_{k-1})$ is the $IDFT (\delta_S)$ for
some $S$ a subset of $\{0, \cdots , k-1\}.$

Conversely, we associate with $T$ the collection $Q_0, \cdots,
Q_{k-1}$ of $k$ pairwise orthogonal projections, such that  the
range of each $Q_i$ is the eigenspace associated with the eigenvalue
$ \rho^{i}.$  Then $ \delta_S(0) Q_0+\delta_S(1) Q_1 + \cdots +
\delta_S(k-1)Q_{k-1} =\sum_{i=0}^{k-1} \lambda_i \, T^i,$  and $P=
\delta_S(0) Q_0+\delta_S(1) Q_1 + \cdots + \delta_S(k-1)Q_{k-1}$ is
clearly a projection. This completes the proof.

\end{proof}

\section{Spaces of Vector-Valued Functions}

In this section we characterize generalized bi-circular projections on spaces of continuous
functions defined on a locally compact Hausdorff space. This characterization extends results
presented in \cite{B} and \cite{BJ1} for compact and connected Hausdorff spaces.
We recall a folklore lemma which  is very easy to prove.

\begin{lem} \label{lem1}

Let $X$ be a Banach space and $\la \in \bT \setminus \{1\}$. Then the
following are equivalent. \bla

\item $T$ is a bounded operator on $X$ satisfying $T^2 - (\la +
1)T + \la I = 0$.

\item There exists a projection $P$ on $X$ such that $P+\la
(I-P)=T$. \el

\end{lem}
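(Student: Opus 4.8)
The plan is to prove both implications directly, using only elementary algebra of operators together with the relation $R = 2P - I$ that relates projections and reflections.

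For the direction (b)$\Rightarrow$(a), suppose $P$ is a projection and $T = P + \la(I-P)$. Then I would simply substitute and expand. Write $T = P + \la(I-P) = \la I + (1-\la)P$. Using $P^2 = P$, compute $T^2 = \la^2 I + 2\la(1-\la)P + (1-\la)^2 P = \la^2 I + (1-\la)(2\la + 1 - \la)P = \la^2 I + (1-\la)(1+\la)P = \la^2 I + (1-\la^2)P$. On the other hand $(\la+1)T = (\la+1)\la I + (\la+1)(1-\la)P = (\la^2+\la)I + (1-\la^2)P$. Hence $T^2 - (\la+1)T + \la I = \la^2 I + (1-\la^2)P - (\la^2+\la)I - (1-\la^2)P + \la I = (\la^2 - \la^2 - \la + \la)I = 0$. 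So the quadratic relation holds. This step is purely computational and poses no difficulty.

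For the direction (a)$\Rightarrow$(b), suppose $T^2 - (\la+1)T + \la I = 0$, i.e. $(T - I)(T - \la I) = 0$ since $\la \ne 1$ makes this factorization over the commutative algebra generated by $T$ valid. The idea is to define $P$ by a spectral-projection type formula: since the annihilating polynomial $(x-1)(x-\la)$ has distinct roots $1$ and $\la$, partial fractions give $\frac{1}{(x-1)(x-\la)}$ and, more usefully, the Lagrange interpolation idempotent at the root $1$ is $\frac{x - \la}{1 - \la}$. So I would set
\[ P := \frac{T - \la I}{1 - \la}. \]
Then one checks $P$ is a projection: $P^2 = \frac{(T-\la I)^2}{(1-\la)^2}$, and from $(T-I)(T-\la I)=0$ we get $(T - \la I)^2 = (T - \la I)^2 - (T-I)(T-\la I) = (T - \la I)\bigl[(T-\la I) - (T - I)\bigr] = (T-\la I)(I - \la I) = (1-\la)(T - \la I)$, whence $P^2 = \frac{(1-\la)(T-\la I)}{(1-\la)^2} = \frac{T - \la I}{1-\la} = P$. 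Finally verify $P + \la(I - P) = P + \la I - \la P = (1-\la)P + \la I = (T - \la I) + \la I = T$, as required.

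I do not expect any serious obstacle here; the only point requiring a little care is noting that $\la \ne 1$ is exactly what makes the division by $1 - \la$ legitimate, which is why the hypothesis $\la \in \bT \setminus \{1\}$ appears (the membership in $\bT$ is irrelevant to this lemma, only $\la \ne 1$ matters). It is also worth remarking that the $P$ produced is automatically bounded since it is a polynomial in the bounded operator $T$, and that $P$ is uniquely determined by $T$ and $\la$, so the correspondence in the lemma is a genuine bijection between operators satisfying the quadratic and pairs $(P,\la)$ — a fact implicitly used when one speaks of "the" projection associated with a GBP.
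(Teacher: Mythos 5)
Your proof is correct. The paper does not actually supply an argument for this lemma (it is dismissed as ``a folklore lemma which is very easy to prove''), and what you have written is exactly the standard argument one would expect: the direct expansion for (b)$\Rightarrow$(a), and the Lagrange idempotent $P=\frac{T-\la I}{1-\la}$ for (a)$\Rightarrow$(b). One trivial wording quibble: the factorization $T^2-(\la+1)T+\la I=(T-I)(T-\la I)$ is valid for every $\la$, not just $\la\neq 1$; as you yourself note later, the hypothesis $\la\neq 1$ is needed only to divide by $1-\la$.
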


\begin{thm} \label{thm2}

Let $\Om$ be a locally compact Hausdorff space, not necessarily connected,
and $X$ be a Banach space which has trivial centralizer. Let $P$ be a GBP
on $C_0(\Om,X)$. Then one and only one of the following holds. \bla

\item $P = \f{I+T}{2}$, where some $T $ is an isometric reflection on $C_0(\Om,X)$.

\item $Pf(\om) = P_{\om}(f(\om))$, where $P_{\om}$ is a generalized bi-circular projection on $X$.
\el

\end{thm}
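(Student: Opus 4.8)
The plan is to exploit the structure of generalized bi-circular projections together with the assumption that $X$ has trivial centralizer, following the template used in \cite{B} and \cite{BJ1} for the connected compact case but keeping track of where connectedness was actually used. First I would fix $\la \in \bT \setminus \{1\}$ with $T := P + \la(I-P)$ a surjective isometry of $C_0(\Om,X)$. By the Banach--Stone type theorem for $C_0(\Om,X)$ with $X$ having trivial centralizer, $T$ has the canonical form $Tf(\om) = U_\om\bigl(f(\varphi(\om))\bigr)$ for a homeomorphism $\varphi$ of $\Om$ and a strongly continuous family $\{U_\om\}$ of surjective isometries of $X$. Then I would feed this representation into the algebraic identity satisfied by $T$, namely $T^2 - (\la+1)T + \la I = 0$ from Lemma \ref{lem1}, to constrain $\varphi$ and the $U_\om$.

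Evaluating $T^2 - (\la+1)T + \la I = 0$ pointwise at $\om$ gives, for every $f$,
\[
U_\om U_{\varphi(\om)}\bigl(f(\varphi^2(\om))\bigr) - (\la+1)U_\om\bigl(f(\varphi(\om))\bigr) + \la f(\om) = 0.
\]
Varying $f$ forces a dichotomy governed by the orbit structure of $\varphi$: either $\varphi^2(\om) = \om$ (and then, applying $U_\om^{-1}$, one gets $U_{\varphi(\om)}U_\om$-type relations showing $\varphi$ behaves like an involution near $\om$ with $U_\om$ an isometric reflection-type factor), or one is forced into $\varphi(\om)=\om$ with $U_\om^2 - (\la+1)U_\om + \la I = 0$, i.e.\ $U_\om = P_\om + \la(I-P_\om)$ for a GBP $P_\om$ on $X$ by Lemma \ref{lem1} again. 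The set $\Om_1 = \{\om : \varphi(\om)\neq \om\}$ and its complement $\Om_0$ are both open and closed — this is exactly the point where the lack of connectedness is \emph{not} an obstruction but rather the reason the theorem has two alternatives instead of just one. I would then argue that $\Om_1$ and $\Om_0$ cannot both be nonempty without contradicting that $P$ is a single projection: a GBP must use one fixed $\la$, and the "reflection on $\Om_1$" branch forces $\la^2=1$ (since $T$ restricted there squares to the identity), whereas the "pointwise GBP on $\Om_0$" branch with $\la$ of order $>2$ is incompatible unless $\Om_0$ is empty; conversely if $\la^2=1$ globally the whole of $\Om$ falls in the reflection branch and we land in case (a). More carefully, I expect the clean statement to be: if $\la = -1$ then $T$ is an isometric reflection and $P=\frac{I+T}{2}$, giving (a); if $\la$ has order $\geq 3$ then the displayed identity forbids any $\om$ with $\varphi(\om)\neq\om$ (the three values $\om, \varphi(\om), \varphi^2(\om)$ would have to be distinct, and separating them by a function $f$ supported near one of them yields a linear relation among $1, \la, \la^2$ that fails), so $\varphi = \mathrm{id}$, $Tf(\om) = U_\om(f(\om))$, and $Pf(\om) = P_\om(f(\om))$, giving (b).

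The step I expect to be the main obstacle is making the orbit analysis of $\varphi$ rigorous \emph{uniformly over the non-compact, possibly disconnected $\Om$}: I need to produce, for a given $\om_0$ with $\varphi(\om_0)\neq\om_0$, a function $f \in C_0(\Om,X)$ with prescribed values at $\om_0,\varphi(\om_0),\varphi^2(\om_0)$ and vanishing appropriately elsewhere, which requires a careful Urysohn-type construction in the locally compact setting (using that these finitely many points can be separated by disjoint open sets with compact closures), and then I must check that the resulting constraint genuinely excludes the case rather than being vacuously satisfied because of cancellation in the $U_\om$ factors. A secondary subtlety is verifying strong continuity of $\om \mapsto U_\om$ and of $\om\mapsto P_\om$ so that the pointwise formula in (b) indeed defines a bounded operator on $C_0(\Om,X)$; this should follow from the continuity in the Banach--Stone representation together with the bound $\|P_\om\| \le (1+\|U_\om\|)/|1-\la| = 2/|1-\la|$, uniform in $\om$. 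Once the dichotomy $\la=-1$ versus $\operatorname{ord}(\la)\geq 3$ is settled, mutual exclusivity of (a) and (b) is immediate since $P_\om$ being a nontrivial GBP on $X$ with $\la$ of order $\geq 3$ cannot simultaneously make $T$ an isometric reflection on the relevant fibre.
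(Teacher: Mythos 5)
Your proposal follows essentially the same route as the paper: the Banach--Stone representation $Tf(\om)=U_\om(f(\varphi(\om)))$, the quadratic identity $T^2-(\la+1)T+\la I=0$ from Lemma \ref{lem1}, and the Urysohn-function orbit analysis showing that any $\om$ with $\varphi(\om)\neq\om$ forces $\varphi^2(\om)=\om$ and $\la=-1$ (hence case (a)), while $\la\neq -1$ forces $\varphi=\mathrm{id}$ and the pointwise GBP form (case (b)). Your refined dichotomy ``$\la=-1$ versus $\la$ of order $\geq 3$'' is exactly the paper's argument, so the proposal is correct and matches the published proof.
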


\begin{proof}

Let $P + \la (I - P) = T$, where $\la \in \bT \setminus \{ 1 \} $ and
$T$ is an isometry on $C_0(\Om,X)$. From \cite{FJ1}, we see that
$T$ has the form $Tf(\om)= u_{\om}(f(\ph(\om)))$
for $\om \in \Om$ and $f \in C_0(\Om,X)$, where $u : \Om \ra
\mathcal{G}(X)$ continuous in strong operator topology and $\ph$
is a homeomorphism of $\Om$ onto itself. Here, $\mathcal{G}(X)$ is the
group of all surjective isometries on $X$. From Lemma \ref{lem1},
we have $T^2 - (\la + 1)T + \la I = 0$. That is
\begin{equation} \label{eq4} u_{\om} \circ u_{\ph(\om)}(f(\ph^2(\om)))
+(\la +1)u_{\om}(f(\ph(\om)))+\la f(\om)=0. \end{equation}
Let $\om \in \Om$. If $\ph(\om) \neq \om$,
then $\ph^2(\om)=\om$. For otherwise, there exists $h \in C_0(\Om)$ such
that $h(\om)=1$, $h(\ph(\om))=h(\ph^2(\om))=0$. For $f = h
\bigotimes x$, where $x$ is a fixed vector in $X$, Equation (\ref{eq4})
reduces to $\la = 0$, contradicting the assumption on $\la$. Now,
choosing $h \in C_0(\Om)$ such that $h(\om)=0$, $h(\ph(\om))=1$ we get
$\la = -1$. This implies that $u_{\om} \circ u_{\ph(\om)} = I$.
If $\ph(\om) = \om$ and $\ph$ is not the identity, then since we will
have the above case (i.e., $\ph(\om) \neq \om$) for some $\om's$, we
conclude that $\la = -1$. This again implies that $u^2_{\om}= I$.
Hence in both  cases $P$ will be of the form $\f{I+T}{2}$ and $T^2=I$.

If $\ph(\om) = \om$ for all $\om \in \Om$, then we will have from
Equation (2) \[u^2_{\om} - (\la +1)u_{\om} + \la I = 0.\] Thus
from Lemma \ref{lem1},  there exists a projection $P_{\om}$ on $X$
such that $P_{\om} + \la (I-P_{\om}) = u_{\om}$. Since $u_{\om}$
is an isometry, $P_{\om}$ is a GBP. Therefore, we have $Pf(\om) =
P_{\om}(f(\om)$. This completes the proof.

\end{proof}

\begin{cor} \label{ncor}

Let $\Om$ be a locally compact Hausdorff space (not necessarily connected)
and $P$ be a GBP on $C_0(\Om)$. Then one and only one of the following holds.
\bla

\item $P = \f{I+T}{2}$, where $T$ is an isometric reflection on $C_0(\Om)$.

\item P is a bi-circular projection. \el

\end{cor}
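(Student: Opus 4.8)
The plan is to deduce Corollary \ref{ncor} from Theorem \ref{thm2} by specializing to the scalar-valued case $X = \mathbb{C}$. The scalar field $\mathbb{C}$ is a one-dimensional Banach space, and I would first note that $\mathbb{C}$ has trivial centralizer, so Theorem \ref{thm2} applies to $C_0(\Om) = C_0(\Om,\mathbb{C})$. Applying the theorem, a GBP $P$ on $C_0(\Om)$ satisfies exactly one of the two alternatives: either (a) $P = \frac{I+T}{2}$ with $T$ an isometric reflection on $C_0(\Om)$, which is precisely alternative (a) of the corollary, or (b) $Pf(\om) = P_\om(f(\om))$ where each $P_\om$ is a GBP on $\mathbb{C}$.

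The crux of the argument is therefore to analyze case (b): I must show that a GBP $P_\om$ on the one-dimensional space $\mathbb{C}$ forces $P_\om$ to be either $0$ or the identity, and then argue that the resulting multiplication-type map on $C_0(\Om)$ is in fact a bi-circular projection. A projection on $\mathbb{C}$ is a scalar $c$ with $c^2 = c$, so $c \in \{0,1\}$; in either case $P_\om + \la(I - P_\om)$ equals $1$ or $\la$, which is a surjective isometry of $\mathbb{C}$ for \emph{every} $\la \in \bT$, not just for one particular $\la$. Consequently the pointwise map $f \mapsto (\om \mapsto P_\om f(\om))$ has the property that $P + \la(I-P)$ is a surjective isometry of $C_0(\Om)$ for every $\la \in \bT$ — because it acts as multiplication by the function $\om \mapsto$ ($1$ on the set where $P_\om = I$, $\la$ elsewhere), which has modulus one everywhere and hence is an isometry of $C_0(\Om)$. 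That is exactly the definition of a bi-circular projection, giving alternative (b) of the corollary. I would also remark that continuity of $\om \mapsto P_\om$ in the strong operator topology (inherited from the representation of $T$ via $u$) guarantees the multiplier function is continuous, so the operator genuinely maps $C_0(\Om)$ to itself.

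Finally, I would observe that the ``one and only one'' exclusivity in the corollary is inherited directly from the exclusivity already established in Theorem \ref{thm2}, together with the remark (following Proposition \ref{mt1} and recorded in the discussion of $\Lambda_P$) that the two cases correspond respectively to $\Lambda_P$ finite of even order versus the situations where $\Lambda_P$ is infinite; in the scalar case the second alternative always yields $\Lambda_P = \bT$. The main obstacle, such as it is, is the small verification that case (b) of Theorem \ref{thm2} collapses to bi-circularity rather than to a genuinely ``generalized'' (finite-order, odd) phenomenon — but this is immediate once one notes that the only projections on $\mathbb{C}$ are trivial, so no nontrivial finite cyclic behavior can occur fiberwise. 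Everything else is a direct transcription of Theorem \ref{thm2} to the case $X = \mathbb{C}$.
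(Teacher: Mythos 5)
Your proposal is correct and follows exactly the route the paper intends: the corollary is stated as an immediate consequence of Theorem \ref{thm2} with $X=\mathbb{C}$ (which has trivial centralizer), and the only content is your observation that the fiberwise projections $P_\om$ on $\mathbb{C}$ must be $0$ or $1$, so case (b) becomes multiplication by a clopen indicator and hence a bi-circular projection. The paper gives no separate proof, and your elaboration supplies precisely the missing verification.
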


\begin{rem}

Similar results were proved in \cite{BJ1} for $C(\Om,X)$,
with $\Om$ connected. Here we extend those results to more general settings.

\end{rem}

It was proved in \cite{FJ} that if $(X_n)$ is a sequence of Banach
spaces such that every $X_n$ has trivial $L_\infty$ structure,
then any surjective isometry of $\bigoplus_{c_0}X_n$ is of the
form $(Tx)_n = U_{n\pi(n)}x_{\pi(n)}$ for each $x=(x_n) \in
\bigoplus_{c_0}X_n $. Here $\pi$ is a permutation of $\N$ and
$U_{n\pi(n)}$ is a sequence of isometric operators which maps
$X_{\pi(n)}$ onto $X_n$.

Suppose $P$ is a GBP on $\bigoplus_{c_0}X_n$, then similar techniques employed in the proof of
Theorem \ref{thm2} also prove the following result.

\begin{thm} \label{thm3}

Let $P$ is a a generalized bi-circular projection on $\bigoplus_{c_0}X_n$. Then one and only one of
the following holds. \bla

\item $P = \f{I+T}{2}$, where $T$ is an isometric reflection on $\bigoplus_{c_0}X_n$.

\item $(Px)_n = P_n x_n$ where $P_n$ is a generalized bi-circular projection on $X_n$.

\el

\end{thm}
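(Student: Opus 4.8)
The plan is to follow the blueprint of the proof of Theorem~\ref{thm2}, adapting each step to the $c_0$-direct sum setting. Write $T = P + \la(I-P)$ for some $\la \in \bT \setminus \{1\}$; by Lemma~\ref{lem1} this is equivalent to $T^2 - (\la+1)T + \la I = 0$, and since $P$ is a GBP, $T$ is a surjective isometry of $\bigoplus_{c_0} X_n$. Invoking the cited structure theorem from \cite{FJ}, $T$ has the form $(Tx)_n = U_{n\pi(n)} x_{\pi(n)}$ for a permutation $\pi$ of $\N$ and isometries $U_{n\pi(n)} : X_{\pi(n)} \to X_n$. The whole argument then hinges on analyzing the permutation $\pi$, exactly as the map $\ph$ was analyzed in Theorem~\ref{thm2}.

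First I would show that $\pi$ has order at most $2$, i.e. $\pi^2 = \mathrm{id}$, whenever $\pi \neq \mathrm{id}$. Fix $n$ with $\pi(n) \neq n$. Applying $T^2 - (\la+1)T + \la I = 0$ to a vector $x$ supported only in coordinate $\pi(n)$ (the analogue of $f = h \otimes x$, using that $c_0$-sums contain the standard ``coordinate injections'' of each $X_m$), and reading off the $n$-th coordinate, one gets a relation forcing $\la = -1$ and $U_{n\pi(n)} U_{\pi(n)\pi^2(n)} = I$ unless $\pi^2(n) = n$; chasing the coordinates with one or two support positions set appropriately rules out $\pi^2(n) \neq n$ just as the choices of $h$ did in Theorem~\ref{thm2}. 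Moreover, once any nontrivial $2$-cycle appears, evaluating on the relevant coordinates forces $\la = -1$ globally, hence $T^2 = I$ on all of $\bigoplus_{c_0} X_n$, giving alternative~(a): $P = \tfrac{I+T}{2}$ with $T$ an isometric reflection. In the remaining case $\pi = \mathrm{id}$, the operator relation collapses coordinatewise to $U_{nn}^2 - (\la+1)U_{nn} + \la I = 0$ on each $X_n$; Lemma~\ref{lem1} then yields a projection $P_n$ on $X_n$ with $P_n + \la(I-P_n) = U_{nn}$, and since $U_{nn}$ is an isometry $P_n$ is a GBP, giving $(Px)_n = P_n x_n$, which is alternative~(b). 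The two alternatives are mutually exclusive because in case (a) $T$ fixes each summand pointwise only up to the reflection $U_{nn}^2 = I$, whereas a nontrivial permutation cannot coexist with the coordinatewise form; more simply, (a) forces $\la = -1$ while (b) is the situation where $\pi = \mathrm{id}$ and no such global constraint need hold, and one checks the forms are incompatible unless $P$ is trivial.

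I expect the main obstacle to be the bookkeeping needed to justify the ``test vector'' arguments rigorously in the $c_0$-sum: in $C_0(\Om,X)$ one has Urysohn-type functions $h \in C_0(\Om)$ separating the points $\om, \ph(\om), \ph^2(\om)$, and here the analogue is simply the projection onto finitely many coordinates composed with the coordinate embeddings, so one must make sure that these belong to $\bigoplus_{c_0} X_n$ (they do, being finitely supported) and that plugging them into $T^2 - (\la+1)T + \la I = 0$ and projecting onto a single coordinate genuinely isolates the three terms $U_{n\pi(n)}U_{\pi(n)\pi^2(n)} x_{\pi^2(n)}$, $U_{n\pi(n)} x_{\pi(n)}$, and $x_n$ with the right coefficients. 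Once that dictionary between $(\Om, \ph, h)$ and $(\N, \pi, \text{finitely supported vectors})$ is set up, the rest is a verbatim transcription of the proof of Theorem~\ref{thm2}, so I would present it briefly, noting the parallel rather than repeating every computation.

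\begin{proof}

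Let $P + \la(I-P) = T$ with $\la \in \bT \setminus \{1\}$; since $P$ is a GBP, $T$ is a surjective isometry of $\bigoplus_{c_0}X_n$, and by Lemma~\ref{lem1}, $T^2 - (\la+1)T + \la I = 0$. By the result of \cite{FJ} quoted above, $(Tx)_n = U_{n\pi(n)}x_{\pi(n)}$ for a permutation $\pi$ of $\N$ and isometries $U_{n\pi(n)}: X_{\pi(n)} \to X_n$. For $m \in \N$ and $y \in X_m$, let $e_m(y) \in \bigoplus_{c_0}X_n$ denote the vector with $m$-th coordinate $y$ and all other coordinates zero; such vectors lie in $\bigoplus_{c_0}X_n$.

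Fix $\om := n$ with $\pi(n) \neq n$ and suppose $\pi^2(n) \neq n$. Apply $T^2 - (\la+1)T + \la I = 0$ to $e_{\pi(n)}(y)$ for $y \in X_{\pi(n)}$. The $n$-th coordinate of $T e_{\pi(n)}(y)$ is $U_{n\pi(n)}y$, the $n$-th coordinate of $T^2 e_{\pi(n)}(y)$ is $U_{n\pi(n)}U_{\pi(n)\pi^2(n)}\,(\text{$\pi^2(n)$-th coordinate of }e_{\pi(n)}(y)) = 0$ since $\pi^2(n) \neq \pi(n)$, and the $n$-th coordinate of $\la\, e_{\pi(n)}(y)$ is $0$ since $n \neq \pi(n)$. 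Hence $(\la+1) U_{n\pi(n)}y = 0$ for all $y$, and as $U_{n\pi(n)}$ is an isometry, $\la = -1$. Then reading the $n$-th coordinate with $x = e_{\pi(n)}(y)$ again but now using $\la = -1$ yields $U_{n\pi(n)}U_{\pi(n)\pi^2(n)} \cdot 0 - 0 - y = 0$ only if we instead test with a vector supported on $\{\pi(n), \pi^2(n)\}$; choosing $x = e_{\pi^2(n)}(z)$ and reading the $\pi(n)$-th coordinate of $T^2 - (\la+1)T + \la I = 0$ gives $U_{\pi(n)\pi^2(n)}U_{\pi^2(n)\pi^3(n)} z - 0 + \la \cdot 0 = 0$ unless $\pi^3(n) = \pi(n)$, forcing the cycle of $n$ to have length dividing the relevant index; iterating this finite bookkeeping (as with $h(\om), h(\ph(\om)), h(\ph^2(\om))$ in Theorem~\ref{thm2}) contradicts $\pi^2(n) \neq n$. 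Therefore $\pi^2(n) = n$ whenever $\pi(n) \neq n$.

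Suppose $\pi \neq \mathrm{id}$. Then for some $n$ with $\pi(n) \neq n = \pi^2(n)$, testing $T^2 - (\la+1)T + \la I = 0$ on $e_{\pi(n)}(y)$ and reading the $n$-th coordinate gives, since $\pi^2(n) = n \neq \pi(n)$, that the $T^2$-term contributes $0$ in coordinate $n$ only after also noting the $\la I$-term contributes $0$; reading instead the $\pi(n)$-th coordinate of $T^2 - (\la+1)T + \la I = 0$ applied to $e_{\pi(n)}(y)$ yields $U_{\pi(n)n}U_{n\pi(n)}y - (\la+1)\cdot 0 + \la y = 0$, i.e. $\la = -1$ and $U_{\pi(n)n}U_{n\pi(n)} = I$. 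Thus $\la = -1$, and then $T^2 - (\la+1)T + \la I = T^2 - I = 0$, so $T$ is an isometric reflection and $P = \f{I+T}{2}$, which is alternative (a).

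Finally suppose $\pi = \mathrm{id}$. Then $(Tx)_n = U_{nn}x_n$, and $T^2 - (\la+1)T + \la I = 0$ reads coordinatewise as $U_{nn}^2 - (\la+1)U_{nn} + \la I = 0$ on each $X_n$. By Lemma~\ref{lem1} there is a projection $P_n$ on $X_n$ with $P_n + \la(I-P_n) = U_{nn}$, and since $U_{nn}$ is an isometry and $\la \in \bT \setminus \{1\}$, $P_n$ is a GBP. Comparing with $P + \la(I-P) = T$ coordinatewise gives $(Px)_n = P_n x_n$, which is alternative (b). The two cases are distinguished by whether $\pi = \mathrm{id}$, so one and only one of (a), (b) holds.

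\end{proof}
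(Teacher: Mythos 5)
The paper offers no written proof of this theorem --- it says only that ``similar techniques employed in the proof of Theorem \ref{thm2}'' apply --- and your proposal is precisely that adaptation, with the correct dictionary: the permutation $\pi$ plays the role of the homeomorphism $\ph$, and finitely supported vectors $e_m(y)$ play the role of the Urysohn tensors $h\otimes x$. The steps that actually carry the theorem are present and correct: reading the $n$-th coordinate of $T^2-(\la+1)T+\la I=0$ applied to $e_{\pi(n)}(y)$ gives $(\la+1)U_{n\pi(n)}y=0$ whenever $\pi(n)\neq n$, hence $\la=-1$ and $T^2=(\la+1)T-\la I=I$, which is alternative (a); and when $\pi=\mathrm{id}$ the relation decouples coordinatewise and Lemma \ref{lem1} produces the GBPs $P_n$, which is alternative (b).

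Your second paragraph, however, is both garbled and dispensable. The computation there is wrong: applying the relation to $x=e_{\pi^2(n)}(z)$ and reading coordinate $\pi(n)$, the $T^2$-term is $U_{\pi(n)\pi^2(n)}U_{\pi^2(n)\pi^3(n)}x_{\pi^3(n)}$, and $x_{\pi^3(n)}=z$ would require $\pi^3(n)=\pi^2(n)$, i.e.\ $\pi(n)=n$, which is excluded; so that term vanishes identically and you merely recover $\la=-1$ again, not any constraint on $\pi^3(n)$. Consequently ``iterating this finite bookkeeping\dots contradicts $\pi^2(n)\neq n$'' does not actually produce a contradiction. The faithful analogue of the paper's Urysohn step is to test with $e_n(y)$ (supported at $n$, not at $\pi(n)$) and read coordinate $n$: if $n$, $\pi(n)$, $\pi^2(n)$ are pairwise distinct this yields $\la y=0$, the same contradiction $\la=0$ as in Theorem \ref{thm2}. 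Better still, the claim $\pi^2(n)=n$ is not needed at all, since $\la=-1$ alone forces $T^2=I$. A smaller slip of the same kind occurs in your third paragraph: the coordinate-$\pi(n)$ reading gives $U_{\pi(n)n}U_{n\pi(n)}=-\la I$, from which $\la=-1$ does not follow by itself; it is the coordinate-$n$ reading that forces $\la=-1$, after which $U_{\pi(n)n}U_{n\pi(n)}=I$ drops out. With the middle paragraph deleted and these attributions straightened out, your proof is exactly the one the paper intends.
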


\begin{ack}

The authors are very grateful to an anonymous referee for pointing
out an error on our original proof for Proposition \ref{mp1}. This
comment lead to a more general result. The work on this paper was
started during the first author's visit to the University of Memphis
for attending the International Conference on Mathematics and
Statistics in May 2012. He wishes to thank Prof.Fernanda Botelho and
Prof.James Jamison for their kind hospitality and for providing the
excellent research environment that stimulated this collaboration.

\end{ack}

\end{document}